\renewcommand{\theenumii}{\@roman\c@enumii}
\newtheorem{lemma}{Lemma}
\newtheorem{theorem}{Theorem}
\newtheorem{assumption}{Assumption}
\def\gra{\{1, 2, \dots, m\}}
\def\grb{\{m+1, m+2, \dots, n\}}
\begin{document}

\title{Sion's mini-max theorem and Nash equilibrium in a multi-players game with two groups which is zero-sum and symmetric in each group\thanks{This work was supported by Japan Society for the Promotion of Science KAKENHI Grant Number 15K03481 and 18K01594.}}

\small{
\author{
Atsuhiro Satoh\thanks{atsatoh@hgu.jp}\\[.01cm]
Faculty of Economics, Hokkai-Gakuen University,\\[.02cm]
Toyohira-ku, Sapporo, Hokkaido, 062-8605, Japan,\\[.01cm]
\textrm{and} \\[.1cm]
Yasuhito Tanaka\thanks{yasuhito@mail.doshisha.ac.jp}\\[.01cm]
Faculty of Economics, Doshisha University,\\
Kamigyo-ku, Kyoto, 602-8580, Japan.\\}
}

\date{}

\maketitle
\thispagestyle{empty}

\vspace{-1.25cm}

\begin{abstract}
We consider the relation between Sion's minimax theorem for a continuous function and a Nash equilibrium in a multi-players game with two groups which is zero-sum and symmetric in each group. We will show the following results.
\begin{enumerate}
\item The existence of Nash equilibrium which is symmetric in each group implies Sion's minimax theorem with the coincidence of the maximin strategy and the minimax strategy for players in each group. 
	\item Sion's minimax theorem with the coincidence of the maximin strategy and the minimax strategy for players in each group implies the existence of a Nash equilibrium which is symmetric in each group.
\end{enumerate}
Thus, they are equivalent. An example of such a game is a relative profit maximization game in each group under oligopoly with two groups such that firms in each group have the same cost functions and maximize their relative profits in each group, and the demand functions are symmetric for the firms in each group.
\end{abstract}

\begin{description}
	\item[Keywords:] multi-players zero-sum game, two groups, Nash equilibrium, Sion's minimax theorem
\end{description}

\begin{description}
	\item[JEL Classification:] C72
\end{description}

\section{Introduction}

We consider the relation between Sion's minimax theorem for a continuous function and the existence of Nash equilibrium in a multi-players game with two groups which is zero-sum and symmetric in each group. There are $n$ players. Players $1, 2, \dots, m$ are in one group, and Players $m+1, m+2, \dots, n$ are in the other group. We assume $n\geq 4$ and $2\leq m\leq n-2$. Thus, each group has at least two players. Players $1, 2, \dots, m$ have the same payoff functions and strategy spaces, and they play a game which is zero-sum in this group, that is, the sum of the payoffs of Players $1, 2, \dots, m$ is zero. Similarly, Players $m+1, m+2, \dots, n$ have the same payoff functions and strategy spaces, and they play a game which is zero-sum in this group, that is, the sum of the payoffs of Players $m+1, m+2, \dots, n$ is zero.

We will show the following results.
\begin{enumerate}
	\item The existence of a Nash equilibrium which is symmetric in each group implies Sion's minimax theorem with the coincidence of the maximin strategy and the minimax strategy for players in each group. 
	\item Sion's minimax theorem for players with the coincidence of the maximin strategy and the minimax strategy in each group implies the existence of Nash equilibrium which is symmetric in each group.
\end{enumerate}
Thus, they are equivalent.

An example of such a game is a relative profit maximization game in each group under oligopoly with two groups such that firms in each group have the same cost functions and maximize their relative profits in each group, and demand functions are symmetric for the firms in each group. Assume that there are six firms, A, B, C, D, E and F. Let $\bar{\pi}_A$, $\bar{\pi}_B$, $\bar{\pi}_C$, $\bar{\pi}_D$, $\bar{\pi}_E$ and $\bar{\pi}_F$ be the absolute profits of, respectively, Firms A, B, C, D, E and F. Firms A, B and E have the same cost function, and the demand functions are symmetric for them. Firms C, D and F have the same cost function, and the demand functions are symmetric for them. However, the firms in different groups have different cost functions, and the demand functions are not symmetric for firms in different groups.

The relative profits of Firms A, B and E are
\[\pi_A=\bar{\pi}_A-\frac{1}{2}(\bar{\pi}_B+\bar{\pi}_E),\]
\[\pi_B=\bar{\pi}_B-\frac{1}{2}(\bar{\pi}_A+\bar{\pi}_E),\]
\[\pi_E=\bar{\pi}_E-\frac{1}{2}(\bar{\pi}_A+\bar{\pi}_B).\]
The relative profits of Firms C, D and F are
\[\pi_C=\bar{\pi}_C-\frac{1}{2}(\bar{\pi}_D+\bar{\pi}_F),\]
\[\pi_D=\bar{\pi}_D-\frac{1}{2}(\bar{\pi}_C+\bar{\pi}_F),\]
\[\pi_F=\bar{\pi}_F-\frac{1}{2}(\bar{\pi}_C+\bar{\pi}_D).\]
We see
\[\pi_A+\pi_B+\pi_E=0,\]
\[\pi_C+\pi_D+\pi_F=0.\]
Firms A, B, C, D, E and F maximize, respectively, $\pi_A$, $\pi_B$, $\pi_C$, $\pi_D$, $\pi_E$ and $\pi_F$. Thus, the relative profit maximization game in each group is a zero-sum game\footnote{About relative profit maximization under imperfect competition please see \cite{mm}, \cite{ebl2}, \cite{eb2}, \cite{st}, \cite{eb1}, \cite{ebl1} and \cite{redondo}}. In Section \ref{ex} we present an example of relative profit maximization in each group under oligopoly with two groups. 

We think that our analysis can be easily extended to a case with more than two groups.

\section{The model and Sion's minimax theorem}

Consider a multi-players game with two groups which is zero-sum and symmetric in each group. There are $n$ players. Players $1, 2, \dots, m$ are in one group, and Players $m+1, m+2, \dots, n$ are in the other group. We assume $n\geq 4$ and $2\leq m\leq n-2$. Thus, each group has at least two players. Players $1, 2, \dots, m$ have the same payoff functions and strategy spaces, and they play a game which is zero-sum in this group, that is, the sum of the payoffs of Players $1, 2, \dots, m$ is zero. Similarly, Players $m+1, m+2, \dots, n$ have the same payoff functions and strategy spaces, and they play a game which is zero-sum in this group, that is, the sum of the payoffs of Players $m+1, m+2, \dots, n$ is zero. The strategic variables for the players are $s_1$, $s_2$, \dots, $s_n$, and ($s_1, s_2, \dots, s_n)\in S_1\times S_2\times \dots \times S_n$. $S_1$, $S_2,\ \dots,\ S_n$ are convex and compact sets in linear topological spaces. 

The payoff function of each player is $u_i(s_1, s_2,\dots, s_n), i=1, 2, \dots, n$. We assume 
\begin{quote}
$u_i$'s for $i=1, 2, \dots, n$ are continuous real-valued functions on $S_1\times S_2\times \dots \times S_n$, quasi-concave on $S_i$ for each $s_j\in S_j,\ j\neq i$, and quasi-convex on $S_j$ for $j\neq i$ for each $s_i\in S_i$.
\end{quote}

Since the game is zero-sum in each group, we have
\begin{equation}
u_1(s_1, s_2,\dots,  s_n)+u_2(s_1, s_2,\dots,  s_n)+\dots, u_m(s_1, s_2,\dots,  s_n)=0,\label{e1}
\end{equation}
\begin{equation}
u_{m+1}(s_1, s_2,\dots,  s_n)+u_{m+2}(s_1, s_2,\dots,  s_n)+\dots, u_n(s_1, s_2,\dots,  s_n)=0,\label{e1a}
\end{equation}
for given $(s_1, s_2,\dots, s_n)$.

Sion's minimax theorem (\cite{sion}, \cite{komiya}, \cite{kind}) for a continuous function is stated as follows.
\begin{lemma}
Let $X$ and $Y$ be non-void convex and compact subsets of two linear topological spaces, and let $f:X\times Y \rightarrow \mathbb{R}$ be a function, that is continuous and quasi-concave in the first variable and continuous and quasi-convex in the second variable. Then
\[\max_{x\in X}\min_{y\in Y}f(x,y)=\min_{y\in Y}\max_{x\in X}f(x,y).\] \label{l1}
\end{lemma}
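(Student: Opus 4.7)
The plan is to prove the two inequalities $\max_x \min_y f(x,y) \leq \min_y \max_x f(x,y)$ and $\max_x \min_y f(x,y) \geq \min_y \max_x f(x,y)$ separately. The first is routine: for any $(x,y) \in X \times Y$ we have $\min_{y'} f(x,y') \leq f(x,y) \leq \max_{x'} f(x',y)$, and then taking $\max$ over $x$ on the left and $\min$ over $y$ on the right yields the inequality. The existence of the extrema themselves follows from the continuity of $f$ together with the compactness of $X$ and $Y$, combined with Berge's maximum theorem to ensure that the inner extrema are continuous in the remaining variable.

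For the nontrivial direction, I would argue by contradiction. Suppose the inequality is strict and choose $\alpha < \beta$ strictly between the two values. Then for every $x \in X$ there is some $y \in Y$ with $f(x,y) < \alpha$, and for every $y \in Y$ there is some $x \in X$ with $f(x,y) > \beta$. By continuity, the sets $U_y = \{x \in X : f(x,y) < \alpha\}$ are open in $X$ and cover $X$, while the sets $V_x = \{y \in Y : f(x,y) > \beta\}$ are open in $Y$ and cover $Y$. Compactness yields finite subcovers $\{U_{y_1},\dots,U_{y_k}\}$ of $X$ and $\{V_{x_1},\dots,V_{x_\ell}\}$ of $Y$.

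The core of the argument is an induction on the size of the $Y$-subcover that reduces it to a single point, which then contradicts the existence of an $x$ with $f(x,\cdot) > \beta$ at that point. The inductive step is the crucial maneuver: given covering points $y_1,\dots,y_k$ with $k \geq 2$, I would use quasi-concavity of $f(\cdot,y_i)$ to conclude that the closed sets $C_i = \{x \in X : f(x,y_i) \geq \alpha\}$ are convex, and then use quasi-convexity of $f(x,\cdot)$ along the segment $y(t) = (1-t)y_1 + t y_2$ together with a continuity/connectedness argument to produce a single $y^\ast \in [y_1,y_2]$ for which $\{U_{y^\ast},U_{y_3},\dots,U_{y_k}\}$ still covers $X$. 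Iterating collapses the cover to one point.

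The main obstacle is this inductive collapse from two points to one. It is here that all of the hypotheses must simultaneously be deployed: compactness to keep the cover finite, convexity of $Y$ to form the segment between $y_1$ and $y_2$, continuity to make the relevant $t$-dependent sets vary continuously, quasi-concavity in $x$ to guarantee convex level sets on which an intermediate-value argument can be run, and quasi-convexity in $y$ to ensure that along the segment the "bad" set $\{x : f(x,y(t)) \geq \alpha\}$ behaves monotonically enough to locate $y^\ast$. I expect the remaining setup (the easy direction, the extraction of subcovers, and the base case of the induction) to be essentially routine once this delicate two-point lemma is in hand.
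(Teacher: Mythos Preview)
The paper does not actually prove Lemma~\ref{l1}. Sion's minimax theorem is merely quoted, with citations to \cite{sion}, \cite{komiya}, and \cite{kind}, and the remark that the formulation follows Kindler. So there is no ``paper's own proof'' to compare your proposal against.

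For what it is worth, the outline you give is essentially the standard elementary argument found in \cite{komiya} and \cite{kind}: the easy inequality, the contradiction setup with two thresholds $\alpha<\beta$, the passage to finite subcovers by compactness, and an inductive collapse of the covering points using the segment between two of them. Your identification of the two-point merge as the crux is correct. One refinement worth noting for when you fill in the details: what quasi-convexity in $y$ gives directly is that for every $t\in[0,1]$ the set $\{x:f(x,y(t))\geq\alpha\}$ is contained in $C_1\cup C_2$ (since $f(x,y(t))\leq\max\{f(x,y_1),f(x,y_2)\}$). Because this set is convex (quasi-concavity in $x$) and $C_1,C_2$ are closed convex sets whose intersection is disjoint from the residual region, it must lie entirely in one of the $C_i$; a connectedness argument in $t$ then locates the switching point $y^\ast$. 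Your sketch has all the right ingredients but is a bit loose about which side of the picture the connectedness argument runs on.
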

We follow the description of this theorem in \cite{kind}.

Let $s_h$'s for $h\neq i, j;\ i,j \in \{1, 2, \dots, m\}$ be given; then, $u_i(s_1, s_2, \dots, s_n)$ is a function of $s_i$ and $s_j$. We can apply Lemma \ref{l1} to such a situation, and get the following equation.
\begin{equation}
\max_{s_i\in S_i}\min_{s_j\in S_j}u_i(s_1, s_2, \dots, s_n)=\min_{s_j\in S_j}\max_{s_i\in S_i}u_i(s_1, s_2, \dots, s_n).\label{as0}
\end{equation}
By symmetry
\begin{equation*}
\max_{s_j\in S_j}\min_{s_i\in S_i}u_j(s_1, s_2, \dots, s_n)=\min_{s_i\in S_i}\max_{s_j\in S_j}u_j(s_1, s_2, \dots, s_n).
\end{equation*}
Similarly, let $s_h$'s for $h\neq k, l;\ k,l \in \{m+1, m+2, \dots, n\}$ be given; then we obtain
\begin{equation}
\max_{s_k\in S_k}\min_{s_l\in S_l}u_k(s_1, s_2, \dots, s_n)=\min_{s_l\in S_l}\max_{s_k\in S_k}u_i(s_1, s_2, \dots, s_n).\label{as0a}
\end{equation}
By symmetry
\begin{equation*}
\max_{s_l\in S_l}\min_{s_k\in S_k}u_l(s_1, s_2, \dots, s_n)=\min_{s_k\in S_k}\max_{s_l\in S_l}u_j(s_1, s_2, \dots, s_n).
\end{equation*}

We assume that $\arg\max_{s_i\in S_i}\min_{s_j\in S_j}u_i(s_1, s_2, \dots, s_n)$, $\arg\min_{s_j\in S_j}\max_{s_i\in S_i}u_i(s_1, s_2, \dots, s_n)$ and so on are unique, that is, single-valued. By the maximum theorem they are continuous in $s_h$'s,$\ h\neq i, j$ or in $s_h$'s,$\ h\neq k, l$. Also, throughout this paper we assume that the maximin strategy and the minimax strategy of players in any situation are unique, and the best responses of players in any situation are unique.

Let us consider a point such that $s_i=s$ for $i\in \{1, 2, \dots, m\}$ and $s_k=s'$ for $k\in \{m+1, m+2, \dots, n\}$, and consider the following function.
\[
\begin{pmatrix}
s\\
s'
\end{pmatrix}
\rightarrow 
\begin{pmatrix}
\arg\max_{s_i\in S_i}\min_{s_j\in S_j}u_i(s_i,s_j,s,\dots,s,s',\dots,s')\\
\arg\max_{s_k\in S_k}\min_{s_l\in S_l}u_k(s,\dots,s,s_k, s_l,s',\dots,s')
\end{pmatrix},
\]
for $\ i\in \{1, 2, \dots, m\}$, $k\in \{m+1,m+2,\dots,n\}$. Since $u_i$ and $u_k$ are continuous, $S_i=S_j$ is compact and $S_k=S_l$ is compact, these functions are also continuous. Thus, there exists a fixed point of $(s,s')$. Denote it by $(\tilde{s},\hat{s})$. It satisfies
\begin{equation}
\tilde{s}=\arg\max_{s_i\in S_i}\min_{s_j\in S_j}u_i(s_i,s_j,\tilde{s},\dots,\tilde{s},\hat{s},\dots,\hat{s}),\ i\in \{1, 2, \dots, m\},\label{fix}
\end{equation}
\begin{equation}
\hat{s}=\arg\max_{s_k\in S_k}\min_{s_l\in S_l}u_k(\tilde{s},\dots,\tilde{s},s_k,s_l,\hat{s},\dots,\hat{s}),\ k\in \{m+1,m+2,\dots,n\}.\label{fixa}
\end{equation}

Now we assume 
\begin{assumption}
About $\tilde{s}$ and $\hat{s}$ which satisfy (\ref{fix}) and (\ref{fixa}),
\begin{align*}
\arg\max_{s_i\in S_i}\min_{s_j\in S_j}u_i(s_i,s_j,\tilde{s},\dots,\tilde{s},\hat{s},\dots,\hat{s})=\arg\min_{s_j\in S_j}\max_{s_i\in S_i}u_i(s_i,s_j,\tilde{s},\dots,\tilde{s},\hat{s},\dots,\hat{s}), 
\end{align*}
\[\arg\max_{s_k\in S_k}\min_{s_l\in S_l}u_k(\tilde{s},\dots,\tilde{s},s_k,s_l,\hat{s},\dots,\hat{s})=\arg\min_{s_l\in S_l}\max_{s_k\in S_k}u_k(\tilde{s},\dots,\tilde{s},s_k,s_l,\hat{s},\dots,\hat{s}),\]
for $\ i\in \{1, 2, \dots, m\}$, $k\in \{m+1,m+2,\dots,n\}$, that is, the maximin strategy and the minimax strategy coincide.\label{as1}
\end{assumption}

As we will show in the Appendix, without Assumption \ref{as1} we may have a Nash equilibrium which is asymmetric in each group.

\section{The main results}

Consider a Nash equilibrium which is symmetric in each group. Let $s_i^*$'s and $s_k^*$'s be the values of $s_i$'s for $i\in \{1, 2, \dots, m\}$ and $s_k$'s for $k\in \{m+1, m+2, \dots, n\}$ which, respectively, maximize $u_i$'s and $u_k$'s, that is, 
\[u_i(s_1^*,s_2^*,\dots,s_i^*,\dots,s_n^*)\geq u_i(s_1^*,s_2^*,\dots,s_i,\dots,s_n^*)\ \mathrm{for\ any}\ s_i\in S_i,\]
and
\[u_k(s_1^*,s_2^*,\dots,s_k^*,\dots,s_n^*)\geq u_k(s_1^*,s_2^*,\dots,s_k,\dots,s_n^*)\ \mathrm{for\ any}\ s_k\in S_k,\]
If the Nash equilibrium is symmetric in each group, $s_1^*$'s for all $i\in \{1,2, \dots, m\}$ are equal, and $s_k^*$'s for all $k\in \{m+1,m+2, \dots, n\}$ are equal.

Notations of strategy choice by players are as follows.
\begin{quote}
$(s_i,s^*,\dots,s^*,s^{**},\dots,s^{**})$ is a vector of strategy choice by players such that Players 1, $\dots$, $m$ other than $i$ choose $s^*$ and Players $m+1$, $\dots$, $n$ choose $s^{**}$. $(s^*,\dots,s^*,s_k,s^{**},\dots,s^{**})$ is a vector such that Players 1, $\dots$, $m$ choose $s^*$ and Players $m+1$, $\dots$, $n$ other than $k$ choose $s^{**}$. $(s_i,s_j,s^*,\dots,s^*,s^{**},\dots,s^{**})$ is a vector such that Players 1, $\dots$, $m$ other than $i$ and $j$ choose $s^*$ and Players $m+1$, $\dots$, $n$ choose $s^{**}$. $(s^*,\dots,s^*,s_k,s_l,s^{**},\dots,s^{**})$ is a vector such that Players 1, $\dots$, $m$ choose $s^*$ and Players $m+1$, $\dots$, $n$ other than $k$ and $l$ choose $s^{**}$. 

$(s_i,\tilde{s},\dots,\tilde{s},\hat{s},\dots,\hat{s})$ is a vector of strategy choice by players such that Players 1, $\dots$, $m$ other than $i$ choose $\tilde{s}$ and Players $m+1$, $\dots$, $n$ choose $\hat{s}$. $(\tilde{s},\dots,\tilde{s},s_k,\hat{s},\dots,\hat{s})$ is a vector such that Players 1, $\dots$, $m$ choose $\tilde{s}$ and Players $m+1$, $\dots$, $n$ other than $k$ choose $\hat{s}$. $(s_i,s_j,\tilde{s},\dots,\tilde{s},\hat{s},\dots,\hat{s})$ is a vector such that Players 1, $\dots$, $m$ other than $i$ and $j$ choose $\tilde{s}$ and Players $m+1$, $\dots$, $n$ choose $\hat{s}$. $(\tilde{s},\dots,\tilde{s},s_k,s_l,\hat{s},\dots,\hat{s})$ is a vector such that Players 1, $\dots$, $m$ choose $\tilde{s}$ and Players $m+1$, $\dots$, $n$ other than $k$ and $l$ choose $\hat{s}$.

The same applies to other similar notations.
\end{quote}

We show the  following theorem.
\begin{theorem}
The existence of Nash equilibrium which is symmetric in each group implies Sion's minimax theorem with the coincidence of the maximin strategy and the minimax strategy. 
\label{t1}
\end{theorem}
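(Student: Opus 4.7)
The plan is to show that the symmetric Nash equilibrium $(s^*,\ldots,s^*,s^{**},\ldots,s^{**})$ is itself a saddle point of the bivariate function obtained by freezing all but two same-group players' strategies, and then close the argument with Lemma \ref{l1}. The saddle-point inequalities will give $\max\min\geq h(s^*,s^*)\geq\min\max$, while Lemma \ref{l1} supplies the reverse inequality, forcing equality and identifying $s^*$ (resp.\ $s^{**}$) as the common maximin and minimax strategy for players in group 1 (resp.\ group 2).

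The key step is to exploit zero-sum plus symmetry. Fix players $i,j\in\{1,\ldots,m\}$, freeze $s_h=s^*$ for the other players in group 1 and $s_h=s^{**}$ throughout group 2, and consider the single-deviation profile in which only player $j$ moves to $s_j$. Write $\phi(s_j):=u_j$ and $\psi(s_j):=u_i$ at this profile. By symmetry within group 1, every non-deviating player in group 1 faces the same reduced situation as player $i$ and hence earns the same $\psi(s_j)$; the zero-sum identity (\ref{e1}) therefore collapses to the single linear relation $\phi(s_j)+(m-1)\psi(s_j)=0$. The Nash property for player $j$ says $\phi$ attains its maximum on $S_j$ at $s_j=s^*$; the linear relation then forces $\psi$ to attain its minimum on $S_j$ at $s_j=s^*$, and $\phi(s^*)=\psi(s^*)$ (true by symmetry within group 1 at the equilibrium) combined with the relation gives $\phi(s^*)=\psi(s^*)=0$.

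Setting $h(s_i,s_j):=u_i(s_i,s_j,s^*,\ldots,s^*,s^{**},\ldots,s^{**})$, the Nash best-response inequality for player $i$ reads $h(s_i,s^*)\leq h(s^*,s^*)=0$, while the previous paragraph reads $h(s^*,s_j)=\psi(s_j)\geq 0=h(s^*,s^*)$. Hence $(s^*,s^*)$ is a saddle point of $h$, and combining the saddle-point inequalities with Lemma \ref{l1} yields $\max_{s_i}\min_{s_j}h=\min_{s_j}\max_{s_i}h=h(s^*,s^*)$; the assumed uniqueness of argmax and argmin then identifies $s^*$ as both the maximin and the minimax strategy of player $i$ with respect to $s_j$. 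The same argument applied to group 2 via (\ref{e1a}) handles $s^{**}$. The only subtle step is the symmetry reduction that collapses (\ref{e1}) to the two-term relation $\phi+(m-1)\psi=0$; once this is in place, everything else is essentially bookkeeping plus Lemma \ref{l1}.
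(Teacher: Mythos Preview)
Your proof is correct and follows essentially the same route as the paper's: both use the zero-sum identity together with within-group symmetry to reduce (\ref{e1}) to the two-term relation $u_j+(m-1)u_i=0$ at a single-deviation profile, deduce that $s^*$ simultaneously maximizes $u_i$ in $s_i$ and minimizes $u_i$ in $s_j$ (with everything else frozen), and then read off the minimax equality and the coincidence of the argmax and argmin via the assumed uniqueness. Your explicit saddle-point phrasing is slightly cleaner than the paper's presentation, but the logical content is the same.

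One small remark: you invoke Lemma~\ref{l1} to supply the inequality $\max_{s_i}\min_{s_j}h\leq\min_{s_j}\max_{s_i}h$, but this direction is elementary and holds for any function; Sion's theorem is not needed here (and the paper does not use it in this proof either---it derives the weak inequality from first principles). Your saddle-point inequalities already give $\min\max\leq h(s^*,s^*)\leq\max\min$, which together with the trivial $\max\min\leq\min\max$ forces equality.
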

\begin{proof}
Let $(s_1,\dots,s_m, s_{m+1}, \dots, s_n)=(s^*,\dots,s^*,s^{**},\dots,s^{**})$ be a Nash equilibrium which is symmetric in each group. Since the game is zero-sum in each group. 
\[u_i(s_i,s^*,\dots,s^*,s^{**},\dots,s^{**})+\sum_{j=1,j\neq i}^mu_j(s_i,s^*,\dots,s^*,s^{**},\dots,s^{**})=0,\]
and
\[u_k(s^*,\dots,s^*,s_k,s^{**},\dots,s^{**})+\sum_{l=m+1,k\neq k}^nu_l(s^*,\dots,s^*,s_k,s^{**},\dots,s^{**})=0\]
imply
\[u_i(s_i,s^*,\dots,s^*,s^{**},\dots,s^{**})=-(m-1)u_j(s_i,s^*,\dots,s^*,s^{**},\dots,s^{**}),\]
and
\[u_k(s^*,\dots,s^*,s_k,s^{**},\dots,s^{**})=-(n-m-1)u_l(s^*,\dots,s^*,s_k,s^{**},\dots,s^{**}).\]
These equations hold for any $s_i$ and $s_k$. Therefore,
\[\arg\max_{s_i\in S_i}u_i(s_i,s^*,\dots,s^*,s^{**},\dots,s^{**})=\arg\min_{s_i\in S_i}u_j(s_i,s^*,\dots,s^*,s^{**},\dots,s^{**}),\]
\[\arg\max_{s_k\in S_k}u_k(s^*,\dots,s^*,s_k,s^{**},\dots,s^{**})=\arg\min_{s_k\in S_k}u_l(s^*,\dots,s^*,s_k,s^{**},\dots,s^{**}).\]
By the assumption of uniqueness of the best responses, they are unique. By symmetry for each group
\[\arg\max_{s_i\in S_i}u_i(s_i,s^*,\dots,s^*,s^{**},\dots,s^{**})=\arg\min_{s_j\in S_j}u_i(s_j,s^*,\dots,s^*,s^{**},\dots,s^{**}),\]
\[\arg\max_{s_k\in S_k}u_k(s^*,\dots,s^*,s_k,s^{**},\dots,s^{**})=\arg\min_{s_l\in S_l}u_k(s^*,\dots,s^*,s_l,s^{**},\dots,s^{**}).\]
Therefore,
\[u_i(s^*,\dots,s^*,s^{**},\dots,s^{**})=\min_{s_j\in S_j}u_i(s_j,s^*,\dots,s^*,s^{**},\dots,s^{**})\leq u_i(s_j,s^*,\dots,s^*,s^{**},\dots,s^{**}),\]
\[u_k(s^*,\dots,s^*,s^{**},\dots,s^{**})=\min_{s_l\in S_l}u_k(s^*,\dots,s^*,s_l,s^{**},\dots,s^{**})\leq u_k(s^*,\dots,s^*,s_l,s^{**},\dots,s^{**}).\]
We get
\[\max_{s_i\in S_i}u_i(s_i,s^*,\dots,s^*,s^{**},\dots,s^{**})=u_i(s^*,\dots,s^*,s^{**},\dots,s^{**})=\min_{s_j\in S_j}u_i(s_j,s^*,\dots,s^*,s^{**},\dots,s^{**}),\]
\[\max_{s_k\in S_k}u_k(s^*,\dots,s^*,s_k,s^{**},\dots,s^{**})=u_k(s^*,\dots,s^*,s^{**},\dots,s^{**})=\min_{s_l\in S_l}u_k(s^*,\dots,s^*,s_l,s^{**},\dots,s^{**}),\]
They mean
\begin{align}
&\min_{s_j\in S_j}\max_{s_i\in S_i}u_i(s_i,s_j,s^*,\dots,s^*,s^{**},\dots,s^{**})\leq \max_{s_i\in S_i}u_i(s_i,s^*,\dots,s^*,s^{**},\dots,s^{**}) \label{e3} \\
=&\min_{s_j\in S_j}u_i(s_j,s^*,\dots,s^*,s^{**},\dots,s^{**})\leq \max_{s_i\in S_i}\min_{s_j\in S_j}u_i(s_i,s_j,s^*,\dots,s^*,s^{**},\dots,s^{**}).\notag
\end{align}
and
\begin{align}
&\min_{s_l\in S_l}\max_{s_k\in S_k}u_k(s^*,\dots,s^*,s_k,s_l,s^{**},\dots,s^{**})\leq \max_{s_k\in S_k}u_k(s^*,\dots,s^*,s_k,s^{**},\dots,s^{**}) \label{e3a} \\
=&\min_{s_l\in S_l}u_k(s^*,\dots,s^*,s_l,s^{**},\dots,s^{**})\leq \max_{s_k\in S_k}\min_{s_l\in S_l}u_k(s^*,\dots,s^*,s_k,s_l,s^{**},\dots,s^{**}).\notag
\end{align}
On the other hand, since
\[\min_{s_j\in S_j}u_i(s_i,s_j,s^*,\dots,s^*,s^{**},\dots,s^{**})\leq u_i(s_i,s_j,s^*,\dots,s^*,s^{**},\dots,s^{**}),\]
\[\min_{s_k\in S_k}u_k(s^*,\dots,s^*,s_k,s_l,s^{**},\dots,s^{**})\leq u_k(s^*,\dots,s^*,s_k,s_l,s^{**},\dots,s^{**}),\]
we have
\[\max_{s_i\in S_i}\min_{s_j\in S_j}u_i(s_i,s_j,s^*,\dots,s^*,s^{**},\dots,s^{**})\leq \max_{s_i\in S_i}u_i(s_i,s_j,s^*,\dots,s^*,s^{**},\dots,s^{**}),\]
\[\max_{s_k\in S_k}\min_{s_l\in S_l}u_k(s^*,\dots,s^*,s_k,s_l,s^{**},\dots,s^{**})\leq \max_{s_k\in S_k}u_k(s^*,\dots,s^*,s_k,s_l,s^{**},\dots,s^{**}).\]
These inequalities hold for any $s_j$ and $s_l$. Thus,
\[\max_{s_i\in S_i}\min_{s_j\in S_j}u_i(s_i,s_j,s^*,\dots,s^*,s^{**},\dots,s^{**})\leq \min_{s_j\in S_j}\max_{s_i\in S_i}u_i(s_i,s_j,s^*,\dots,s^*,s^{**},\dots,s^{**}),\]
\[\max_{s_k\in S_k}\min_{s_l\in S_l}u_k(s^*,\dots,s^*,s_k,s_l,s^{**},\dots,s^{**})\leq \min_{s_l\in S_l}\max_{s_k\in S_k}u_k(s^*,\dots,s^*,s_k,s_l,s^{**},\dots,s^{**}),\]
With (\ref{e3}) and (\ref{e3a}), we obtain
\begin{equation}
\max_{s_i\in S_i}\min_{s_j\in S_j}u_i(s_i,s_j,s^*,\dots,s^*,s^{**},\dots,s^{**})=\min_{s_j\in S_j}\max_{s_i\in S_i}u_i(s_i,s_j,s^*,\dots,s^*,s^{**},\dots,s^{**}),\label{t1-1}
\end{equation}
\begin{equation}
\max_{s_k\in S_k}\min_{s_l\in S_l}u_k(s^*,\dots,s^*,s_k,s_l,s^{**},\dots,s^{**})=\min_{s_D\in S_D}\max_{s_C\in S_C}u_C(s^*,\dots,s^*,s_k,s_l,s^{**},\dots,s^{**}).\label{t1-1a}
\end{equation}
From
\[\min_{s_j\in S_j}u_i(s_i,s_j,s^*,\dots,s^*,s^{**},\dots,s^{**})\leq u_i(s_i,s^*,\dots,s^*,s^{**},\dots,s^{**}),\]
\[\min_{s_l\in S_l}u_k(s^*,\dots,s^*,s_k,s^{**},\dots,s^{**})\leq u_k(s^*,\dots,s^*,s_k,s^{**},\dots,s^{**}),\]
\[\max_{s_i\in S_i}\min_{s_j\in S_j}u_i(s_i,s_j,s^*,\dots,s^*,s^{**},\dots,s^{**})=\max_{s_i\in S_i}u_i(s_i,s^*,\dots,s^*,s^{**},\dots,s^{**}),\]
and
\[\max_{s_k\in S_k}\min_{s_l\in S_l}u_k(s^*,\dots,s^*,s_k,s_l,s^{**},\dots,s^{**})=\max_{s_k\in S_k}u_k(s^*,\dots,s^*,s_k,s^{**},\dots,s^{**}),\]
we have
\[\arg\max_{s_i\in S_i}\min_{s_j\in S_j}u_i(s_i,s_j,s^*,\dots,s^*,s^{**},\dots,s^{**})=\arg\max_{s_i\in S_i}u_i(s_i,s^*,\dots,s^*,s^{**},\dots,s^{**})=s^*,\]
\[\arg\max_{s_k\in S_k}\min_{s_l\in S_l}u_k(s^*,\dots,s^*,s_k,s_l,s^{**},\dots,s^{**})=\arg\max_{s_k\in S_k}u_k(s^*,\dots,s^*,s_k,s^{**},\dots,s^{**})=s^{**}.\]
From
\[\max_{s_i\in S_i}u_i(s_i,s_j,s^*,\dots,s^*,s^{**},\dots,s^{**})\geq u_i(s_j,s^*,\dots,s^*,s^{**},\dots,s^{**}),\]
\[\max_{s_k\in S_k}u_k(s^*,\dots,s^*,s_k,s_l,s^{**},\dots,s^{**})\geq u_k(s^*,\dots,s^*,s_l,s^{**},\dots,s^{**}),\]
\[\min_{s_j\in S_j}\max_{s_i\in S_i}u_i(s_i,s_j,s^*,\dots,s^*,s^{**},\dots,s^{**})=\min_{s_j\in S_j}u_i(s_j,s^*,\dots,s^*,s^{**},\dots,s^{**}),\]
and
\[\min_{s_l\in S_l}\max_{s_k\in S_k}u_k(s^*,\dots,s^*,s_k,s_l,s^{**},\dots,s^{**})=\min_{s_l\in S_l}u_k(s^*,\dots,s^*,s_l,s^{**},\dots,s^{**}),\]
we get
\[\arg\min_{s_j\in S_j}\max_{s_i\in S_i}u_i(s_i,s_j,s^*,\dots,s^*,s^{**},\dots,s^{**})=\arg\min_{s_j\in S_j}u_i(s_j,s^*,\dots,s^*,s^{**},\dots,s^{**})=s^*,\]
\[\arg\min_{s_l\in S_l}\max_{s_k\in S_k}u_k(s^*,\dots,s^*,s_k,s_l,s^{**},\dots,s^{**})=\arg\min_{s_l\in S_l}u_k(s^*,\dots,s^*,s_l,s^{**},\dots,s^{**})=s^{**}.\]
Therefore,
\begin{align}
&\arg\max_{s_i\in S_i}\min_{s_j\in S_j}u_i(s_i,s_j,s^*,\dots,s^*,s^{**},\dots,s^{**}) \label{t1-2} \\ 
=&\arg\min_{s_j\in S_j}\max_{s_i\in S_i}u_i(s_i,s_j,s^*,\dots,s^*,s^{**},\dots,s^{**})=s^*,\notag
\end{align}
\begin{align}
&\arg\max_{s_k\in S_k}\min_{s_l\in S_l}u_k(s^*,\dots,s^*,s_k,s_l,s^{**},\dots,s^{**}) \label{t1-2a} \\
=&\arg\min_{s_l\in S_l}\max_{s_k\in S_k}u_k(s^*,\dots,s^*,s_k,s_l,s^{**},\dots,s^{**})=s^{**}.\notag
\end{align}
\end{proof}

Next we show the following theorem. 
\begin{theorem}
Sion's minimax theorem with the coincidence of the maximin strategy and the minimax strategy implies the existence of a Nash equilibrium which is symmetric in each group.
\end{theorem}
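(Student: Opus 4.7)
The plan is to take the fixed point $(\tilde{s},\hat{s})$ already produced in Section~2 via equations (\ref{fix}) and (\ref{fixa}), and to show that the profile in which every player of group one chooses $\tilde{s}$ and every player of group two chooses $\hat{s}$ is a Nash equilibrium. Symmetry within each group is built in by construction, so only the best-response condition has to be verified; moreover, because the players within a group share the same payoff function and strategy space, it is enough to check it for a single representative $i\in\{1,\dots,m\}$ and a single representative $k\in\{m+1,\dots,n\}$.

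Fix such an $i$ and pick any second player $j\neq i$ in the same group. Freezing the other $n-2$ coordinates at $(\tilde{s},\dots,\tilde{s},\hat{s},\dots,\hat{s})$, the two-variable function $(s_i,s_j)\mapsto u_i(s_i,s_j,\tilde{s},\dots,\tilde{s},\hat{s},\dots,\hat{s})$ inherits continuity, quasi-concavity in $s_i$ and quasi-convexity in $s_j$ from the global hypotheses on $u_i$, so Lemma \ref{l1} applies and gives
\[
\max_{s_i\in S_i}\min_{s_j\in S_j}u_i(s_i,s_j,\tilde{s},\dots,\tilde{s},\hat{s},\dots,\hat{s})=\min_{s_j\in S_j}\max_{s_i\in S_i}u_i(s_i,s_j,\tilde{s},\dots,\tilde{s},\hat{s},\dots,\hat{s}).
\]
By (\ref{fix}) the argmax on the left equals $\tilde{s}$, and by Assumption \ref{as1} the argmin on the right also equals $\tilde{s}$. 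Hence the common value can be written simultaneously as $\min_{s_j\in S_j}u_i(\tilde{s},s_j,\tilde{s},\dots,\tilde{s},\hat{s},\dots,\hat{s})$ and as $\max_{s_i\in S_i}u_i(s_i,\tilde{s},\tilde{s},\dots,\tilde{s},\hat{s},\dots,\hat{s})$.

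Now sandwich the on-diagonal payoff $u_i(\tilde{s},\tilde{s},\dots,\tilde{s},\hat{s},\dots,\hat{s})$: it is at least the first of these two quantities (being one particular value of $s_j\mapsto u_i(\tilde{s},s_j,\dots)$) and at most the second (being one particular value of $s_i\mapsto u_i(s_i,\tilde{s},\dots)$). Since the two quantities coincide, the payoff itself must equal $\max_{s_i\in S_i}u_i(s_i,\tilde{s},\tilde{s},\dots,\tilde{s},\hat{s},\dots,\hat{s})$, which is precisely the best-response condition for player $i$ when all other members of group one play $\tilde{s}$ and group two plays $\hat{s}$. Applying the same argument to (\ref{fixa}) together with the second half of Assumption \ref{as1} shows that $\hat{s}$ is a best response for player $k$ against the same profile, completing the verification.

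I do not expect a serious obstacle: the argument is essentially a sandwich exploiting the Sion equality together with the argmax/argmin coincidence supplied by Assumption \ref{as1}. The only delicate point is notational bookkeeping, namely that setting $s_j=\tilde{s}$ in the two-variable slice recovers the deviation in which only player $i$ is free to move, and that the within-group symmetry identifying $S_j$ with $S_i$ (and letting $j$ stand in for any other group-one player) is what propagates the single-player best-response condition to every player in the group.
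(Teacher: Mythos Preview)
Your proposal is correct and follows essentially the same approach as the paper: both take the fixed point $(\tilde{s},\hat{s})$ of (\ref{fix})--(\ref{fixa}), invoke the Sion equality together with Assumption~\ref{as1} to identify the common maxmin/minmax value with both $\min_{s_j}u_i(\tilde{s},s_j,\tilde{s},\dots,\hat{s})$ and $\max_{s_i}u_i(s_i,\tilde{s},\dots,\hat{s})$, and read off the best-response condition from there. The only cosmetic difference is that the paper separately pins down $\arg\max_{s_i}u_i(s_i,\tilde{s},\dots,\hat{s})=\tilde{s}$ and $\arg\min_{s_j}u_i(s_j,\tilde{s},\dots,\hat{s})=\tilde{s}$ before writing the final inequalities, whereas your sandwich argument extracts the best-response equality directly; the logical content is the same.
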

\begin{proof}
We denote a state such that Players $1, 2, \dots, m$ choose $\tilde{s}$, and Players $m+1, m+2, \dots, n$ choose $\hat{s}$ by $(\tilde{s},\dots,\tilde{s},\hat{s}\dots,\hat{s})$.

Let $\tilde{s}$ and $\hat{s}$ be the values of $s_i$'s for $i\in \gra$ and $s_k$'s for $k\in \grb$ such that
\[\tilde{s}=\arg\max_{s_i\in S_i}\min_{s_j\in S_j}u_i(s_i,s_j,\tilde{s},\dots,\tilde{s},\hat{s},\dots,\hat{s})=\arg\min_{s_j\in S_j}\max_{s_i\in S_i}u_i(s_i,s_j,\tilde{s},\dots,\tilde{s},\hat{s},\dots,\hat{s}),\]
\[\hat{s}=\arg\max_{s_k\in S_k}\min_{s_l\in S_l}u_k(\tilde{s},\dots,\tilde{s},s_k, s_l,\hat{s},\dots,\hat{s})=\arg\min_{s_l\in S_l}\max_{s_k\in S_k}u_k(\tilde{s},\dots,\tilde{s},s_k, s_l,\hat{s},\dots,\hat{s}),\]
\begin{align*}
&\max_{s_i\in S_i}\min_{s_j\in S_j}u_i(s_i,s_j,\tilde{s},\dots,\tilde{s},\hat{s},\dots,\hat{s})=\min_{s_j\in S_j}u_i(s_j,\tilde{s},\dots,\tilde{s},\hat{s},\dots,\hat{s})\\
&=\min_{s_j\in S_j}\max_{s_i\in S_i}u_i(s_i,s_j,\tilde{s},\dots,\tilde{s},\hat{s},\dots,\hat{s})=\max_{s_i\in S_i}u_i(s_i,\tilde{s},\dots,\tilde{s},\hat{s},\dots,\hat{s}),
\end{align*}
and
\begin{align*}
&\max_{s_k\in S_k}\min_{s_l\in S_l}u_k(\tilde{s},\dots,\tilde{s},s_k,s_l,\hat{s},\dots,\hat{s})=\min_{s_l\in S_l}u_k(\tilde{s},\dots,\tilde{s},s_l,\hat{s},\dots,\hat{s})\\
&=\min_{s_l\in S_l}\max_{s_k\in S_k}u_k(\tilde{s},\dots,\tilde{s},s_k, s_l,\hat{s},\dots,\hat{s})=\max_{s_k\in S_k}u_k(\tilde{s},\dots,\tilde{s},s_k,\hat{s},\dots,\hat{s}).
\end{align*}
Since
\[u_i(s_j,\tilde{s},\dots,\tilde{s},\hat{s},\dots,\hat{s})\leq \max_{s_i\in S_i}u_i(s_i,s_j,\tilde{s},\dots,\tilde{s},\hat{s},\dots,\hat{s}),\]
\[\min_{s_j\in S_j}u_i(s_j,\tilde{s},\dots,\tilde{s},\hat{s},\dots,\hat{s})=\min_{s_j\in S_j}\max_{s_i\in S_i}u_i(s_i,s_j,\tilde{s},\dots,\tilde{s},\hat{s},\dots,\hat{s}),\]
we get
\[\arg\min_{s_j\in S_j}u_i(s_j,\tilde{s},\dots,\tilde{s},\hat{s},\dots,\hat{s})=\arg\min_{s_j\in S_j}\max_{s_i\in S_i}u_i(s_i,s_j,\tilde{s},\dots,\tilde{s},\hat{s},\dots,\hat{s})=\tilde{s}.\]
Similarly, from
\[u_k(\tilde{s},\dots,\tilde{s},s_l,\hat{s},\dots,\hat{s})\leq \max_{s_k\in S_k}u_k(\tilde{s},\dots,\tilde{s},s_k, s_l,\hat{s},\dots,\hat{s}),\]
\[\min_{s_l\in S_l}u_k(\tilde{s},\dots,\tilde{s},s_l,\hat{s},\dots,\hat{s})=\min_{s_l\in S_l}\max_{s_k\in S_k}u_k(\tilde{s},\dots,\tilde{s},s_k,s_l,\hat{s},\dots,\hat{s}),\]
we get
\[\arg\min_{s_l\in S_l}u_k(\tilde{s},\dots,\tilde{s},s_l,\hat{s},\dots,\hat{s})=\arg\min_{s_l\in S_l}\max_{s_k\in S_k}u_k(\tilde{s},\dots,\tilde{s},s_k, s_l,\hat{s},\dots,\hat{s})=\hat{s}.\]
Since
\[u_i(s_i,\tilde{s},\dots,\tilde{s},\hat{s},\dots,\hat{s})\geq \min_{s_j\in S_j}u_i(s_i,s_j,\tilde{s},\dots,\tilde{s},\hat{s},\dots,\hat{s}),\]
and
\[\max_{s_i\in S_i}u_i(s_i,\tilde{s},\dots,\tilde{s},\hat{s},\dots,\hat{s})=\max_{s_i\in S_i}\min_{s_j\in S_j}u_i(s_i,s_j,\tilde{s},\dots,\tilde{s},\hat{s},\dots,\hat{s}),\]
we obtain
\[\arg\max_{s_i\in S_i}u_i(s_i,\tilde{s},\dots,\tilde{s},\hat{s},\dots,\hat{s})=\arg\max_{s_i\in S_i}\min_{s_j\in S_j}u_i(s_i,s_j,\tilde{s},\dots,\tilde{s},\hat{s},\dots,\hat{s})=\tilde{s}.\]
Similarly, from
\[u_k(\tilde{s},\dots,\tilde{s},s_k,\hat{s},\dots,\hat{s})\geq \min_{s_l\in S_l}u_k(\tilde{s},\dots,\tilde{s},s_k, s_l,\hat{s},\dots,\hat{s}),\]
and
\[\max_{s_k\in S_k}u_k(\tilde{s},\dots,\tilde{s},s_k,\hat{s},\dots,\hat{s})=\max_{s_k\in S_k}\min_{s_l\in S_l}u_k(\tilde{s},\dots,\tilde{s},s_l,\hat{s},\dots,\hat{s}),\]
we obtain
\[\arg\max_{s_k\in S_k}u_k(\tilde{s},\dots,\tilde{s},s_k,\hat{s},\dots,\hat{s})=\arg\max_{s_k\in S_k}\min_{s_l\in S_l}u_k(\tilde{s},\dots,\tilde{s},s_k, s_l,\hat{s},\dots,\hat{s})=\hat{s}.\]
Therefore,
\[u_i(s_j,\tilde{s},\dots,\tilde{s},\hat{s},\dots,\hat{s})\geq u_i(\tilde{s},\dots,\tilde{s},\hat{s},\dots,\hat{s})\geq u_i(s_i,\tilde{s},\dots,\tilde{s},\hat{s},\dots,\hat{s}),\]
\[u_k(\tilde{s},\dots,\tilde{s},s_l,\hat{s},\dots,\hat{s})\geq u_k(\tilde{s},\dots,\tilde{s},\hat{s},\dots,\hat{s})\geq u_k(\tilde{s},\dots,\tilde{s},s_k,\hat{s},\dots,\hat{s}).\]

Thus, $(\tilde{s},\dots,\tilde{s},\hat{s}\dots,\hat{s})$ is a Nash equilibrium which is symmetric in each group. 
\end{proof}

\section{Example of relative profit maximization in each group of six-firms oligopoly}\label{ex}

Consider a six-players game. The players are A, B, C, D, E and F. Suppose that the payoff functions of Players A, B and E are symmetric, and those of Players C, D and F are symmetric. The payoff functions of the players are
\begin{align*}
\pi_A=&(a-x_A-x_B-x_E-bx_C-bx_D-bx_F)x_A-c_Ax_A\\
&-\frac{1}{2}[(a-x_A-x_B-x_E-bx_C-bx_D-bx_F)x_B-c_Ax_B\\
&+(a-x_A-x_B-x_E-bx_C-bx_D-bx_F)x_E-c_Ax_E],
\end{align*}
\begin{align*}
\pi_B=&(a-x_A-x_B-x_E-bx_C-bx_D-bx_F)x_B-c_Ax_B\\
&-\frac{1}{2}[(a-x_A-x_B-x_E-bx_C-bx_D-bx_F)x_A-c_Ax_A\\
&+(a-x_A-x_B-x_E-bx_C-bx_D-bx_F)x_E-c_Ax_E],
\end{align*}
\begin{align*}
\pi_E=&(a-x_A-x_B-x_E-bx_C-bx_D-bx_F)x_B-c_Ax_B\\
&-\frac{1}{2}[(a-x_A-x_B-x_E-bx_C-bx_D-bx_F)x_B-c_Ax_B\\
&+(a-x_A-x_B-x_E-bx_C-bx_D)x_B-c_Ax_B],
\end{align*}
\begin{align*}
\pi_C=&(a-x_C-x_D-x_F-bx_E-bx_A-bx_B)x_C-c_Cx_C\\
&-\frac{1}{2}[(a-x_C-x_D-x_F-bx_E-bx_A-bx_B)x_D-c_Cx_D\\
&+(a-x_C-x_D-x_F-bx_E-bx_A-bx_B)x_F-c_Cx_F],
\end{align*}
\begin{align*}
\pi_D=&(a-x_C-x_D-x_F-bx_E-bx_A-bx_B)x_D-c_Cx_D\\
&-\frac{1}{2}[(a-x_C-x_D-x_F-bx_E-bx_A-bx_B)x_C-c_Cx_C\\
&+(a-x_C-x_D-x_F-bx_E-bx_A-bx_B)x_F-c_Cx_F],
\end{align*}
\begin{align*}
\pi_F=&(a-x_C-x_D-x_F-bx_E-bx_A-bx_B)x_F-c_Cx_F\\
&-\frac{1}{2}[(a-x_C-x_D-x_F-bx_E-bx_A-bx_B)x_C-c_Cx_C\\
&+(a-x_C-x_D-x_F-bx_E-bx_A-bx_B)x_D-c_Cx_D].
\end{align*}
This is a model of relative profit maximization in each group in a six firms oligopoly with two groups. $x_A$, $x_B$, $x_C$, $x_D$, $x_E$ and $x_F$ are the outputs of the firms, and $p_A$, $p_B$, $p_C$, $p_D$, $p_E$ and $p_F$ are the prices of their goods. The demand functions are symmetric for Firms A, B and E, and they have the same cost functions, also the demand functions are symmetric for Firms C, D and F, and they have the same cost functions. However, the demand function for Firm A (or B or E) is not symmetric for Firm C (or D or F), and the demand function for Firm C (or D or F) is not symmetric for Firm A (or B or E). Firm A's (or Firm B's or Firm E's) cost function is different from the cost function of Firm C (or Firm D or Firm F). The cost functions of the firms are linear and there is no fixed cost.

We assume that Firm A (or B or E) maximizes its profit relatively to the profit of Firm B and E (or A and E, or B and E), and  Firm C (or D or F) maximizes its profit relatively to the profit of Firm D and F (or C and F, or D and F). Note that
\[\pi_A+\pi_B+\pi_E=0,\ \pi_C+\pi_D+\pi_F=0.\]
Thus, this is a model of zero-sum game in each group with two groups.

Under the assumption of Cournot type behavior, the equilibrium outputs are
\[x_A=\frac{bc_C-c_A-ab+a}{3(1-b)(1+b)},\]
\[x_B=\frac{bc_C-c_A-ab+a}{3(1-b)(1+b)},\]
\[x_C=\frac{bc_A-c_C-ab+a}{3(1-b)(1+b)},\]
\[x_D=\frac{bc_A-c_C-ab+a}{3(1-b)(1+b)},\]
\[x_E=\frac{bc_C-c_A-ab+a}{3(1-b)(1+b)},\]
\[x_F=\frac{bc_A-c_C-ab+a}{3(1-b)(1+b)}.\]
The equilibrium prices of the goods are
\[p_A=c_A,\]
\[p_B=c_A,\]
\[p_C=c_C,\]
\[p_D=c_C,\]
\[p_E=c_A,\]
\[p_C=c_C.\]

Therefore, the prices of the goods are equal to the marginal costs in each group.

The maximin and minimax strategies between Firms A and B are
\[\arg\max_{x_A}\min_{x_B}\pi_A,\ \arg\min_{x_B}\max_{x_A}\pi_A.\]
Similarly, we can define the maximin and minimax strategies between Firms A and E, those between Firms B and E, Firms E and A, Firms E and B.

Those between Firm C and D are
\[\arg\max_{x_C}\min_{x_D}\pi_C,\ \arg\min_{x_D}\max_{x_C}\pi_C.\]
Similarly, we can define the maximin and minimax strategies between Firms C and F, those between Firms D and F, Firms F and C, Firms F and D.

In our example, under the assumption that $x_E=x_A$, we obtain
\[\arg\max_{x_A}\min_{x_B}\pi_A=\frac{bc_C-c_A-ab+a}{3(1-b)(1+b)},\]
\[\arg\min_{x_B}\max_{x_A}\pi_A=\frac{bc_C-c_A-ab+a}{3(1-b)(1+b)},\]
\[\arg\max_{x_C}\min_{x_D}\pi_C=\frac{bc_A-c_C-ab+a}{3(1-b)(1+b)},\]
\[\arg\min_{x_D}\max_{x_C}\pi_C=\frac{bc_A-c_C-ab+a}{3(1-b)(1+b)},\]
\[\arg\max_{x_E}\max_{x_A}\pi_E=\frac{bc_C-c_A-ab+a}{3(1-b)(1+b)},\]
\[\arg\min_{x_F}\max_{x_C}\pi_C=\frac{bc_A-c_C-ab+a}{3(1-b)(1+b)},\]
and so on. They are the same as Nash equilibrium strategies.

\section{Concluding Remark}

We think that our analysis can be easily extended to a case with more than two groups.

\appendix

\section{Note on the case where Assumption \ref{as1} is not assumed.}

Let $(\tilde{s},\hat{s},s^1,s^2)$ be the solution (fixed point) of the following equations.

\begin{equation*}
\tilde{s}=\arg\max_{s_i\in S_i}\min_{s_j\in S_j}u_i(s_i,s_j,\tilde{s},\dots,\tilde{s},s^2,\hat{s},\dots,\hat{s}),
\end{equation*}
\[s^1=\arg\min_{s_j\in S_j}\max_{s_i\in S_i}u_i(s_i,s_j,\tilde{s},\dots,\tilde{s},s^2,\hat{s},\dots,\hat{s}),\]
\begin{equation*}
\hat{s}=\arg\max_{s_k\in S_k}\min_{s_l\in S_l}u_k(s^1,\tilde{s},\dots,\tilde{s},s_k,s_l,\hat{s},\dots,\hat{s}).
\end{equation*}
and
\[s^2=\arg\min_{s_l\in S_l}\max_{s_k\in S_k}u_k(s^1,\tilde{s},\dots,\tilde{s},s_k,s_l,\hat{s},\dots,\hat{s}),\]
with $s_j=s^1$ and $s_l=s^2$. By (\ref{as0}) and (\ref{as0a})
\begin{align*}
&\max_{s_i\in S_i}\min_{s_j\in S_j}u_i(s_i, s_j,\tilde{s},\dots,\tilde{s},s^2,\hat{s},\dots,\hat{s})=\min_{s_j\in S_j}u_i(s_j,\tilde{s},\dots,\tilde{s},s^2,\hat{s},\dots,\hat{s})\\
=&\min_{s_j\in S_j}\max_{s_i\in S_i}u_i(s_i, s_j,\tilde{s},\dots,\tilde{s},s^2,\hat{s},\dots,\hat{s})=\max_{s_i\in S_i}u_i(s_i, s^1,\tilde{s},\dots,\tilde{s},s^2,\hat{s},\dots,\hat{s}),
\end{align*}
and
\begin{align*}
&\max_{s_k\in S_k}\min_{s_l\in S_l}u_k(s^1,\tilde{s},\dots,\tilde{s},s_k,s_l,\hat{s},\dots,\hat{s})=\min_{s_l\in S_l}u_k(s^1,\tilde{s},\dots,\tilde{s},s_l,\hat{s},\dots,\hat{s})\\
=&\min_{s_l\in S_l}\max_{s_k\in S_k}u_k(s^1,\tilde{s},\dots,\tilde{s},s_k,s_l,\hat{s},\dots,\hat{s})=\max_{s_k\in S_k}u_k(s^1,\tilde{s},\dots,\tilde{s},s_k,s^2,\hat{s},\dots,\hat{s}).
\end{align*}
Since
\[\max_{s_i\in S_i}u_i(s_i,s_j,\tilde{s},\dots,\tilde{s},s^2,\hat{s},\dots,\hat{s})\geq u_i(s_j,\tilde{s},\dots,\tilde{s},s^2,\hat{s},\dots,\hat{s}),\]
\[\min_{s_j\in S_j}\max_{s_i\in S_i}u_i(s_i, s_j,\tilde{s},\dots,\tilde{s},s^2,\hat{s},\dots,\hat{s})=\min_{s_j\in S_j}u_i(s_j,\tilde{s},\dots,\tilde{s},s^2,\hat{s},\dots,\hat{s}),\]
we have
\begin{equation*}
\arg\min_{s_j\in S_j}\max_{s_i\in S_i}u_i(s_i, s_j,\tilde{s},\dots,\tilde{s},s^2,\hat{s},\dots,\hat{s})=\arg\min_{s_j\in S_j}u_i(s_j,\tilde{s},\dots,\tilde{s},s^2,\hat{s},\dots,\hat{s})=s^1.
\end{equation*}
Similarly, from
\[\max_{s_k\in S_k}u_k(s^1,\tilde{s},\dots,\tilde{s},s_k,s_l,\hat{s},\dots,\hat{s})\geq u_k(s^1,\tilde{s},\dots,\tilde{s},s_l,\hat{s},\dots,\hat{s}),\]
\[\min_{s_l\in S_l}\max_{s_k\in S_k}u_k(s^1,\tilde{s},\dots,\tilde{s},s_k,s_l,\hat{s},\dots,\hat{s})=\min_{s_l\in S_l}u_i(s^1,\tilde{s},\dots,\tilde{s},s_l,\hat{s},\dots,\hat{s}),\]
we have
\begin{equation*}
\arg\min_{s_l\in S_l}\max_{s_k\in S_k}u_k(s^1,\tilde{s},\dots,\tilde{s},s_k,s_l,\hat{s},\dots,\hat{s})=\arg\min_{s_l\in S_l}u_k(s^1,\tilde{s},\dots,\tilde{s},s_l,\hat{s},\dots,\hat{s})=s^2.
\end{equation*}
Since
\[\min_{s_j\in S_j}u_i(s_i, s_j,\tilde{s},\dots,\tilde{s},s^2,\hat{s},\dots,\hat{s})\leq u_i(s_i, s^1,\tilde{s},\dots,\tilde{s},s^2,\hat{s},\dots,\hat{s}),\]
\[\min_{s_l\in S_l}u_k(s^1,\tilde{s},\dots,\tilde{s},s_k,s_l,\hat{s},\dots,\hat{s})\leq u_k(s^1,\tilde{s},\dots,\tilde{s},s_k,s^2,\hat{s},\dots,\hat{s}),\]
\[\max_{s_i\in S_i}\min_{s_j\in S_j}u_i(s_i,s_j,\tilde{s},\dots,\tilde{s},s^2,\hat{s},\dots,\hat{s})=\max_{s_i\in S_i}u_i(s_i, s^1,\tilde{s},\dots,\tilde{s},s^2,\hat{s},\dots,\hat{s}),\]
and
\[\max_{s_k\in S_k}\min_{s_l\in S_l}u_k(s^1,\tilde{s},\dots,\tilde{s},s_k,s_l,\hat{s},\dots,\hat{s})=\max_{s_k\in S_k}u_k(s^1,\tilde{s},\dots,\tilde{s},s_k,s^2,\hat{s},\dots,\hat{s}),\]
we have
\begin{equation}
\arg\max_{s_i\in S_i}\min_{s_j\in S_j}u_i(s_i, s_j,\tilde{s},\dots,\tilde{s},s^2,\hat{s},\dots,\hat{s})=\arg\max_{s_i\in S_i}u_i(s_i, s^1, \tilde{s},\dots,\tilde{s},s^2,\hat{s},\dots,\hat{s})=\tilde{s},\label{e5}
\end{equation}
\begin{equation}
\arg\max_{s_k\in S_k}\min_{s_l\in S_l}u_k(s^1,\tilde{s},\dots,\tilde{s},s_k,s_l,\hat{s},\dots,\hat{s})=\arg\max_{s_k\in S_k}u_k(s^1,\tilde{s},\dots,\tilde{s},s_k,s^2,\hat{s},\dots,\hat{s})=\hat{s}.\label{e5a}
\end{equation}
Because the game is zero-sum in each group,
\[\sum_{i=1, i\neq j}^mu_i(s_j,\tilde{s},\dots,\tilde{s},s^2,\hat{s},\dots,\hat{s})+u_j(s_j,\tilde{s},\dots,\tilde{s},s^2,\hat{s},\dots,\hat{s})=0,\]
\[\sum_{k=m+1, k\neq l}^nu_k(s^1,\tilde{s},\dots,\tilde{s},s_l,\hat{s},\dots,\hat{s}+u_l(s^1,\tilde{s},\dots,\tilde{s},s_l,\hat{s},\dots,\hat{s})=0.\]
By symmetry for each group
\[(m-1)u_i(s_j,\tilde{s},\dots,\tilde{s},s^2,\hat{s},\dots,\hat{s})+u_j(s_j,\tilde{s},\dots,\tilde{s},s^2,\hat{s},\dots,\hat{s})=0,\]
\[(n-m-1)u_k(s^1,\tilde{s},\dots,\tilde{s},s_l,\hat{s},\dots,\hat{s}+u_l(s^1,\tilde{s},\dots,\tilde{s},s_l,\hat{s},\dots,\hat{s})=0.\]
Thus,
\[(m-1)u_i(s_j,\tilde{s},\dots,\tilde{s},s^2,\hat{s},\dots,\hat{s})=-u_j(s_j,\tilde{s},\dots,\tilde{s},s^2,\hat{s},\dots,\hat{s}),\]
\[(n-m-1)u_k(s^1,\tilde{s},\dots,\tilde{s},s_l,\hat{s},\dots,\hat{s})=-u_l(s^1,\tilde{s},\dots,\tilde{s},s_l,\hat{s},\dots,\hat{s}).\]
They mean
\begin{equation}
\arg\min_{s_j\in S_j}u_i(s_j,\tilde{s},\dots,\tilde{s},s^2,\hat{s},\dots,\hat{s})=\arg\max_{s_j\in S_j}u_j(s_j,\tilde{s},\dots,\tilde{s},s^2,\hat{s},\dots,\hat{s})=s^1,\label{e4}
\end{equation}
\begin{equation}
\arg\min_{s_l\in S_l}u_k(s^1,\tilde{s},\dots,\tilde{s},s_l,\hat{s},\dots,\hat{s})=\arg\max_{s_l\in S_l}u_l(s^1,\tilde{s},\dots,\tilde{s},s_l,\hat{s},\dots,\hat{s})=s^2.\label{e4a}
\end{equation}

Therefore, if $s^1\neq \tilde{s}$ or $s^2\neq \hat{s}$, there may exist a Nash equilibrium denoted as follows;
\[(\tilde{s},\dots, s^1,\dots, \tilde{s},\hat{s},\dots,s^2,\dots \hat{s}),\]
We may have $s^1=\tilde{s}$ or $s^2=\hat{s}$.

\end{document}